\newcommand{\mE}{\mathbb{E}}
\newcommand{\mP}{\mathbb{P}}
\newcommand{\mR}{\mathbb{R}}
\newcommand{\cF}{\mathcal{F}}
\newcommand{\1}{\mathds{1}}
\newcommand\dd{\mathrm{d}}
\newtheorem {definition}{Definition}[section]
\newtheorem {theorem}[definition]{Theorem}
\newtheorem{proposition}[definition]{Proposition}
\newtheorem {lemma}[definition]{Lemma}
\newtheorem {corollary}[definition]{Corollary}
\title[O\MakeLowercase{ne-sided stable processes conditioned to avoid an interval}]{Completely asymmetric stable processes conditioned to avoid an interval}
\author{Pierre Lenthe}
\address{Pierre Lenthe: University of Mannheim, Institute of Mathematics, 68161 Mannheim, Germany.}
\email{plenthe@mail.uni-mannheim.de}
\author{Philip Weißmann$^{*}$}\thanks{$^{*}$Supported by the Research Training Group "Statistical Modeling of Complex Systems" funded by the German Science Foundation}
\address{Philip Weißmann: University of Mannheim, Institute of Mathematics, 68161 Mannheim, Germany.}
\email{hweissma@mail.uni-mannheim.de}
\begin{document}

\begin{abstract}
In the recent article Döring et al. \cite{Doer_Kyp_Wei_01} the authors conditioned a stable process with two-sided jumps to avoid an interval. As usual the strategy was to find an invariant function for the process killed on entering the interval and to show that the corresponding $h$-transformed process is indeed the process conditioned to avoid an interval in a meaningful way. In the present article we consider the case of a completely asymmetric stable process. It turns out that the invariant function found in \cite{Doer_Kyp_Wei_01} does not exist or is not invariant but nonetheless, we will characterize the conditioned process as a Markov process.
\end{abstract}

\maketitle 

\section{Introduction}
Conditioning Markov processes to avoid sets is a classical problem. Indeed, suppose $(\mP_x)_{x\in E}$
is a family of Markov probabilities on the state space $E$, and that  $T_B$ is the first hitting time of a fixed set $B$. When $T_B$ is almost surely finite, it is non-trivial to construct and characterise the conditioned process through the natural limiting procedure
\begin{align}\label{cond}
	&\lim_{s\to \infty}\mP_x(\Lambda\,|\, s <T_B) 
\end{align}
or the randomised version
\begin{align}\label{cond_exp}
	&\lim_{q \searrow 0}\mP_x(\Lambda, t < e_q \,|\, e_q<T_B),
\end{align}
for $\Lambda\in \mathcal F_t$ and $x\in E$. Here, $(\mathcal{F}_t)_{t\geq 0}$ denotes the natural enlargement of the filtration induced by the Markov process and $e_q$ are independent exponentially distributed random variables with parameter $q>0$. 

\smallskip

A classical example is the Brownian motion conditioned to avoid the negative half-line.
In this case, the limits \eqref{cond} and \eqref{cond_exp} lead to a so-called Doob $h$-transform
of the Brownian motion killed on entering the negative half-line, by the positive
invariant function $h(x) = x$ on $(0,\infty)$. This Doob $h$-transform
turns out (see Chapter VI.3 of \cite{Rev_Yor_01}) to be the Bessel process of dimension $3$,
which is transient. This example is typical, in that a conditioning procedure
leads to a new process which is transient where the original process was recurrent.
\smallskip

Extensions of this result have been obtained in several directions, most notably to random walks and L\'evy processes. A prominent example with several applications is that of a L\'evy process conditioned to stay positive, which was found by Chaumont and Doney \cite{Chau_Don_01} using
the randomised conditioning \eqref{cond_exp}. In that case, the associated invariant function $h$ is given by the potential function of the descending ladder height process.

\smallskip

In the recent articles \cite{Doer_Kyp_Wei_01} and \cite{Doer_Wat_Wei_01} the authors considered the problem of conditioning a Lévy process to avoid an interval. The first one is focused on stable processes, a subclass of Lévy processes which contains processes satisfying the scaling property
$$((cX_{c^{-\alpha}t})_{t \geq 0},\mP_x) \overset{(d)}{=} ((X_t)_{t \geq 0},\mP_{cx}), \quad x\in \mR, c>0,$$
where $\alpha \in (0,2)$ is the so called index of self-similarity. One assumption is that the stable processes have two-sided jumps. Our task is to fill the remaining gap and handle the case when the stable process has just one-sided jumps. We will show that the conditioned process in the sense of \eqref{cond_exp} is again a Markov process of which we will explicitly present the transition probabilities. Unfortunately, it is not possible to describe this Markov process as an $h$-transform of the process killed on entering the interval. Instead, it is a combination of the processes conditioned to stay above respectively below the interval. As it was done in \cite{Doer_Kyp_Wei_01} we restrict to the interval $[-1,1]$ because we can translate the process conditioned to avoid $[-1,1]$ to the process conditioned to avoid general intervals via the spatial homogeneity and the scaling property.

\smallskip

Before presenting our results, we introduce the most important definitions concerning Lévy processes, their subclass containing stable processes and Doob's $h$-transform. More details can be found, for example, in Bertoin \cite{Bert_01}, Kyprianou \cite{Kyp_01}, Sato \cite{Sat_01} or Chung and Walsh \cite{Chu_Wal_01}.\smallskip

\textbf{Lévy processes:} A Lévy process $X$ is a stochastic process with stationary and independent increments whose trajectories are almost surely right-continuous with left-limits.
For each $x \in \mathbb{R}$, we define the probability measure $\mP_x$ under which
the canonical process $X$ starts at $x$ almost surely. We write $\mP$ for the measure $\mP_0$. The dual measure $\hat{\mP}_x$ denotes the law of the so-called dual process $-X$ started at $x$.
A Lévy process can be identified using its characteristic exponent $\Psi$, defined by the equation
$\mE[ e^{\mathrm{i} q X_t}] = e^{-t\Psi(q)}$, $q\in \mR$, which has the Lévy-Khintchine representation:
\[ \Psi(q) = \mathrm i a q + \frac{1}{2} \sigma^2q^2
  + \int_{\mathbb{R}} (1-e^{\mathrm i q x} + \mathrm i q x \1_{\{\lvert x \rvert < 1\}} ) \, \Pi(\mathrm d x),\quad q \in \mR,
\]
where $a\in\mR$ is the so-called centre of process, $\sigma^2\geq 0$ is the variance of the Brownian component, and the Lévy measure $\Pi$ is a real measure with no atom at $0$ satisfying $\int_\mR (x^2\wedge 1)\, \Pi(\dd x)<\infty$.
\smallskip

\textbf{Killed L\'evy processes and $h$-transforms:}
For a Lévy process $X$ and a Borel set $B$ the killed transition measures are defined as
$$P^{B}_t(x,\dd y)=\mP_x(X_t \in \dd y, t< T_{B}), \quad t\geq 0.$$
The corresponding sub-Markov process is called the L\'evy process killed in $B$.
An invariant function for the killed process is a measurable function $h : \mR \backslash B \rightarrow [0,\infty)$ such that
\begin{align}\label{eq_def_harm}
	\mE_x \big[ \1_{\{ t < T_B \}} h(X_t) \big] = h(x),
	\quad x \in \mR \backslash B, t \geq 0.
\end{align}
An invariant function taking only strictly positive values is called a positive invariant function. Thanks to the Markov property, invariance is equivalent to $(\1_{\{ t < T_B \}} h(X_t))_{t \geq 0}$ being a $\mP_x$-martingale with respect to $(\cF_t)_{t \geq 0}$. When $h$ is a positive invariant function,  the associated Doob $h$-transform is defined via the change of measure
\begin{align}\label{def_htrafo}
	\mP^h_x(\Lambda) := \mE_x \Big[ \1_\Lambda \1_{\{ t < T_B \}} \frac{h(X_t)}{h(x)}  \Big],\quad x\in\mR \backslash B,
\end{align}
for $\Lambda \in \cF_t$. From Chapter 11 of Chung and Walsh \cite{Chu_Wal_01}, we know that under $\mP^h_{x}$ the canonical process is a conservative strong Markov process.

\smallskip

\textbf{Stable processes:} Stable processes are Lévy processes observing the scaling property
\begin{align}\label{scaling}
\left((c {X}_{c^{-\alpha}t})_{t \geq 0},\mP_x \right) \overset{(d)}{=} \left(({X}_{t})_{t \geq 0},\mP_{cx}\right)
\end{align}
for all $x \in \mR$ and $c>0$, where $\alpha$ is the index of self-similarity. It turns out to be necessarily the case that $\alpha\in (0,2]$ with $\alpha=2$ corresponding to the Brownian motion. The continuity of sample paths excludes the Brownian motion from our study, because conditioning to avoid the interval $[-1,1]$ translates to conditioning to stay above $1$ or below $-1$, depending on the initial value of the process. So we restrict to $\alpha\in (0,2)$.

\smallskip

As a Lévy process, stable processes are characterised entirely by their jump measure  which is known to be
\begin{align*}
	\Pi(\dd x) = \frac{\Gamma(\alpha+1)}{\pi} \left\{
\frac{ \sin(\pi \alpha \rho) }{ x^{\alpha+1}} \1_{\{x > 0\}} + \frac{\sin(\pi \alpha \hat\rho)}{ {|x|}^{\alpha+1} }\1_{\{x < 0\}}
 \right\}\dd x,\qquad  x \in \mathbb{R},\end{align*}
where 
$\rho:=\mP({X}_1 \geq 0)$ and $\hat{\rho}= 1-\rho$. We have taken a normalisation for which its characteristic exponent satisfies
\[
\mathbb{E}_x[{\rm e}^{{\rm i}\theta( {X}_1-x)}] = {\rm e}^{-|\theta|^\alpha}, \qquad \theta\in\mathbb{R}.
\]
Later, we will consider the special case of a stable process without negative jumps which forces $\rho=1$ when $\alpha<1$ (in which case $X$ is even a subordinator) and $\rho = 1-1/\alpha$ if $\alpha>1$. For $\alpha=1$ the stable process reduces to the symmetric Cauchy process with $\rho=\frac 1 2$. Since we are here just interested in one-sided jump processes we also exclude the symmetric Cauchy process from our study. 

An important fact we will use for the parameter regimes is that the stable process exhibits (set) transience and (set) recurrence according to whether $\alpha\in (0,1)$ or $\alpha\in[1,2]$. When $\alpha\in(1,2]$ the notion of recurrence is even stronger in the sense that individual points are hit with probability one.

\section{Main results}\label{sec:main}
Let from now on be $X$ a stable process with self-similarity index $\alpha \in (0,2)\setminus\{1 \}$ without negative jumps. Our results are separated between the parameter regimes $\alpha \in (0,1)$ and $\alpha \in (1,2)$.

\smallskip

If $\alpha<1$ the stable process is a subordinator. The following theorem presents an invariant function for the process killed on entering the interval.
\begin{theorem}\label{mainthm1}
Let $(X_t)_{t \geq 0}$ be a stable process with self-similarity index $\alpha \in (0,1)$ and no negative jumps. Then the function
$$h(x) =\begin{cases}
1 \quad &\text{if } x >1 \\
1 - \frac{\sin(\pi\alpha)}{\pi} \int_0^{\frac{2}{-1-x}}  t^{-\alpha} (1+t)^{-1}  \, \dd t & \text{if } x <-1
\end{cases}
$$
is invariant for the process killed on entering $[-1,1]$.
\end{theorem}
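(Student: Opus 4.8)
The plan is to recognise the function $h$ as an \emph{avoidance probability} and to exploit the fact that such probabilities are automatically invariant for the killed process. Write $T := T_{[-1,1]}$. Concretely, I would first prove the identity
\[
	h(x) = \mP_x(T = \infty), \qquad x \in \mR\setminus[-1,1],
\]
that is, $h(x)$ is the probability that the process never enters the interval. Granting this, invariance is a soft consequence of the (strong) Markov property: conditioning on $\cF_t$ and using that, on $\{t < T\}$, the event $\{T = \infty\}$ coincides with the shifted post-$t$ process avoiding the interval, one gets
\[
	\mE_x\big[\1_{\{t < T\}} h(X_t)\big] = \mE_x\big[\1_{\{t < T\}}\,\mP_{X_t}(T = \infty)\big] = \mP_x(T = \infty) = h(x),
\]
with no integrability issue since $h$ is bounded. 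The whole problem thus reduces to computing the avoidance probability explicitly and matching it to the stated formula.

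Since $\alpha \in (0,1)$ and $X$ has no negative jumps, $X$ is a subordinator, hence non-decreasing and pure-jump. Starting from $x > 1$ the path stays in $(1,\infty)$ for all time, so $T = \infty$ almost surely and the avoidance probability equals $1$, matching $h(x) = 1$. For $x < -1$ the process can only leave $(-\infty,-1)$ by a jump, and it avoids $[-1,1]$ precisely when its first passage above $-1$ overshoots the whole interval and lands in $(1,\infty)$. Writing $\tau_a^+ = \inf\{t \geq 0 : X_t > a\}$ and using spatial homogeneity to shift the starting point to $0$, with $y := -1 - x > 0$, this reads
\[
	h(x) = \mP_0\big(X_{\tau_y^+} > y + 2\big).
\]

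The remaining, and main, task is to evaluate this first-passage probability. I would invoke the classical overshoot/undershoot law for subordinators,
\[
	\mP_0\big(X_{\tau_y^+-} \in \dd u,\ X_{\tau_y^+} \in \dd v\big) = U(\dd u)\,\Pi(\dd v - u), \qquad 0 \le u \le y < v,
\]
where $U$ is the potential measure. For the chosen normalisation a short Laplace-transform computation (using the reflection formula $\Gamma(\alpha)\Gamma(1-\alpha) = \pi/\sin(\pi\alpha)$) gives Laplace exponent $\Phi(\lambda) = \lambda^\alpha$, hence $U(\dd u) = u^{\alpha-1}/\Gamma(\alpha)\,\dd u$, while $\Pi(\dd w) = \frac{\Gamma(\alpha+1)\sin(\pi\alpha)}{\pi}\, w^{-\alpha-1}\,\dd w$ on $(0,\infty)$. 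Integrating out $v$ over $(y+2,\infty)$ and then $u$ over $(0,y)$ yields
\[
	h(x) = \frac{\sin(\pi\alpha)}{\pi}\int_0^y u^{\alpha-1}(y + 2 - u)^{-\alpha}\,\dd u.
\]
Finally I would transform this into the stated shape: the substitution $t = (y+2-u)/u$ turns the integral into $\int_{2/y}^\infty t^{-\alpha}(1+t)^{-1}\,\dd t$, and the Beta integral $\int_0^\infty t^{-\alpha}(1+t)^{-1}\,\dd t = \pi/\sin(\pi\alpha)$ converts this tail into $\tfrac{\pi}{\sin(\pi\alpha)}$ minus the integral over $(0,2/y)$, producing exactly the formula in the statement with $2/y = 2/(-1-x)$.

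The conceptual step — that avoidance probabilities are invariant for the killed process — is essentially free. The real work, and the main obstacle, lies in the explicit first-passage computation: pinning down the normalisation of the potential measure, justifying the overshoot law for the stable subordinator, and carrying out the substitution together with the Beta-integral identity that reconciles the two integral expressions.
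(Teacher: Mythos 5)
Your proposal is correct and follows essentially the same route as the paper: identify $h(x)$ with the avoidance probability $\mP_x(T_{[-1,1]}=\infty)$, obtain invariance from the Markov property (the paper writes this with a finite-horizon approximation $\mP_{X_t}(T_{[-1,1]}>s)$ and dominated convergence, but it is the same soft step), and for $x<-1$ compute the avoidance probability as the probability that the first passage over $-1$ overshoots the whole interval. The only difference is that the paper simply cites Rogozin's overshoot distribution for this last step, whereas you rederive it from the first-passage law $U(\dd u)\,\Pi(\dd v-u)$ for subordinators plus the substitution and Beta-integral identity — a self-contained version of the same computation, and your normalisations ($\Phi(\lambda)=\lambda^\alpha$, $U(\dd u)=u^{\alpha-1}\dd u/\Gamma(\alpha)$) are consistent with the paper's choice of Lévy measure.
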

The key to prove this Theorem lies in showing $h(x) = \mP_x(T_{[-1,1]} = \infty)$. Using this invariant function we define the $h$-transform of the process killed on entering $[-1,1]$:
$$\mP_x^\updownarrow(\Lambda) := \mE_x\Big[ \1_\Lambda \1_{\{ t < T_{[-1,1]} \}}  \frac{h(X_t)}{h(x)} \Big], \quad \Lambda \in \cF_t.$$
The next result tells that the process conditioned to avoid $[-1,1]$ in the sense of \eqref{cond} equals this $h$-transform.
\begin{corollary}\label{cor}
Let $(X_t)_{t \geq 0}$ be a stable process with self-similarity index $\alpha \in (0,1)$ without negative jumps. Then it holds:
$$\mP_x^\updownarrow(\Lambda) =  \lim_{s \rightarrow \infty}  \mP_x(\Lambda \,|\, T_{[-1,1]} >s).$$
\end{corollary}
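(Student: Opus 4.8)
The plan is to exploit the probabilistic meaning of the invariant function, namely the identity $h(y) = \mP_y(T_{[-1,1]} = \infty)$ which underlies Theorem \ref{mainthm1}, and to combine it with the Markov property and a monotone passage to the limit. Write $T := T_{[-1,1]}$ and fix $x \in \mR \setminus [-1,1]$, an event $\Lambda \in \cF_t$, and a time $s > t$.

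First I would disintegrate the joint probability at time $t$. Since $T$ is a stopping time, $\{t < T\} \in \cF_t$, and on this event $T = t + T \circ \theta_t$, so that for $s > t$ one has $\{T > s\} = \{t < T\} \cap \{T \circ \theta_t > s - t\}$. Applying the Markov property to the $\cF_t$-measurable set $\Lambda \cap \{t < T\}$ then gives
\[
\mP_x(\Lambda, T > s) = \mE_x\big[\1_\Lambda \1_{\{t < T\}} \, \mP_{X_t}(T > s - t)\big],
\]
and dividing by $\mP_x(T > s)$ expresses the conditional probability as a ratio of two quantities, each of which I can now let $s \to \infty$.

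The second step is the limit itself. Because the process is transient for $\alpha \in (0,1)$, the events $\{T > u\}$ decrease to $\{T = \infty\}$, so $\mP_y(T > u) \downarrow \mP_y(T = \infty) = h(y)$ for every $y \notin [-1,1]$. For the denominator this immediately yields $\mP_x(T > s) \downarrow h(x)$, and $h(x) > 0$ since $x$ lies outside the interval. For the numerator, note that on $\{t < T\}$ the endpoint $X_t$ lies outside $[-1,1]$, so $h(X_t)$ is well defined and, for each fixed trajectory, $\mP_{X_t}(T > s - t) \downarrow h(X_t)$ as $s \to \infty$. Since the integrand is bounded by $1$, dominated (indeed monotone) convergence turns the numerator into $\mE_x[\1_\Lambda \1_{\{t < T\}} h(X_t)]$. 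Dividing by the limit $h(x) > 0$ of the denominator and comparing with the definition of $\mP_x^\updownarrow$ completes the argument.

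In this subordinator regime I do not expect a serious obstacle: the invariant function is bounded and monotone convergence is available for both numerator and denominator, so no delicate uniform estimate is needed. The only point demanding attention is the strict positivity of the limiting denominator $h(x)$, which keeps the ratio meaningful; this is exactly the content of transience in the regime $\alpha \in (0,1)$, ensuring $\mP_x(T = \infty) > 0$ for all $x \notin [-1,1]$.
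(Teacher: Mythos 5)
Your proof is correct and takes essentially the same route as the paper: both rest on the identification $h(y)=\mP_y(T_{[-1,1]}=\infty)$, the Markov property at time $t$, and a monotone passage to the limit $s\to\infty$, with the positivity $h(x)>0$ (guaranteed for $x<-1$ by the explicit overshoot formula behind Theorem \ref{mainthm1}) keeping the ratio meaningful. The only difference is one of direction --- the paper starts from the $h$-transform, rewrites it as $\mP_x(\Lambda, T_{[-1,1]}=\infty)/\mP_x(T_{[-1,1]}=\infty)$ and then expresses this as the limit of conditional probabilities, whereas you start from the conditional probability at finite $s$ and pass to the limit --- which is purely cosmetic.
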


\smallskip

Since the case $\alpha<1$ is handled, we focus on $\alpha>1$ in which case the process is recurrent which implies in particular that $\mP_x(T_{[-1,1]}= \infty) = 0$ and hence, the methods used in the case $\alpha<1$ do not work.
\begin{proposition}\label{propmain}
Let $(X_t)_{t \geq 0}$ be a stable process with self-similarity index $\alpha \in (1,2)$ without negative jumps and
\begin{align}\label{transform}
\mP_x^\updownarrow(\Lambda) := \begin{cases}
\mP_x^\uparrow(\Lambda) := \mE_x \big[\1_\Lambda \1_{\{t < T_{(-\infty,1]}\}} \frac{X_t-1}{x-1}\big] \quad &\text{if } x>1\\
\mP_x^\downarrow(\Lambda) := \mE_x \big[\1_\Lambda \1_{\{t < T_{[-1,\infty)]}\}} \frac{(-1-X_t)^{\alpha-1}}{(-1-x)^{\alpha-1}} \big] &\text{if } x<-1
\end{cases},
\end{align}
for $\Lambda \in \cF_t$. Then $\mP_x^\updownarrow$ defines a probability measure under which $(X_t)_{t \geq 0}$ is a Markov process with transition probabilities
$$P_t^\updownarrow(x, \dd y) = \begin{cases}
\mE_x \big[ \1_{\{X_t \in \dd y\}} \1_{\{t < T_{(-\infty,1]}\}} \frac{X_t-1}{x-1}\big] \quad &\text{if } x,y>1\\
\mE_x \big[ \1_{\{X_t \in \dd y\}} \1_{\{t < T_{[-1,\infty)]}\}} \frac{(-1-X_t)^{\alpha-1}}{(-1-x)^{\alpha-1}} \big] &\text{if } x,y<-1 \\
0 & \text{else}
\end{cases}.$$
\end{proposition}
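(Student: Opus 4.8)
The plan is to recognise the two branches of \eqref{transform} as two independent Doob $h$-transforms of $X$ killed outside a half-line, to invoke the general $h$-transform theory for each branch, and finally to glue them together using the fact that the two conditioned processes can never leave their respective half-lines. Throughout I would write $T^- := T_{(-\infty,1]}$ and $T^+ := T_{[-1,\infty)}$, and set $h_\uparrow(y) := y-1$ for $y>1$ and $h_\downarrow(y) := (-1-y)^{\alpha-1}$ for $y<-1$, so that the two branches are literally $\mP_x^\uparrow$ and $\mP_x^\downarrow$ as in \eqref{def_htrafo}.

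First I would establish the two invariance statements, i.e. that $h_\uparrow$ and $h_\downarrow$ satisfy \eqref{eq_def_harm} for the respective killed processes. For the upward branch, note that for $\alpha\in(1,2)$ the process is recurrent with finite mean and hence centred, so $X_t-x$ is a $\mP_x$-martingale; since $X$ has no negative jumps it hits the level $1$ continuously, whence $X_{T^-}=1$ on $\{T^-<\infty\}$. Applying optional stopping to the bounded stopping time $t\wedge T^-$ and splitting $X_{t\wedge T^-}=X_t\1_{\{t<T^-\}}+\1_{\{T^-\le t\}}$ gives $\mE_x[\1_{\{t<T^-\}}(X_t-1)]=x-1$ for $x>1$, which is precisely the invariance of $h_\uparrow$. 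For the downward branch I would pass to the dual process $\hat X=-X$, which is spectrally negative of the same index, so that conditioning $X$ to stay below $-1$ becomes conditioning $\hat X$ to stay above $1$. The relevant invariant function is the renewal function of the ascending ladder height process of $\hat X$, which for a spectrally negative stable process of index $\alpha$ is a constant multiple of the scale function $W(z)\propto z^{\alpha-1}$; the standard fluctuation identity that $\1_{\{t<\tau\}}W(\hat X_t-1)$ is a martingale, translated back through $y=-z$, yields exactly the invariance of $h_\downarrow(y)=(-1-y)^{\alpha-1}$.

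With invariance in hand, the Doob $h$-transform theory quoted after \eqref{def_htrafo} (Chung and Walsh, Chapter 11) shows that each of $\mP_x^\uparrow$ (for $x>1$) and $\mP_x^\downarrow$ (for $x<-1$) is a conservative probability measure under which $X$ is a strong Markov process, whose one-dimensional transition kernels are precisely the two non-zero blocks of $P_t^\updownarrow$; in particular $\mP_x^\updownarrow$ has total mass one. The crucial additional consequence is confinement: because $h_\uparrow$ and $h_\downarrow$ are genuinely invariant (not merely excessive), the $h$-transforms are conservative, so the killing time is pushed to infinity. Hence $\mP_x^\uparrow(T^-=\infty)=1$ and the process stays in $(1,\infty)$ for all time, while $\mP_x^\downarrow(T^+=\infty)=1$ and it stays in $(-\infty,-1)$ for all time.

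Finally I would glue the pieces. The state space $\mR\setminus[-1,1]$ splits into the two components $(1,\infty)$ and $(-\infty,-1)$, and by confinement a process started in one component never visits the other; this is exactly what forces the off-diagonal (``else'') block of $P_t^\updownarrow$ to vanish and makes the kernel block-diagonal as claimed. Consequently, for $x>1$ we have $X_s>1$ for every $s$, so $\mP_{X_s}^\updownarrow=\mP_{X_s}^\uparrow$, and the Markov property of $\mP_x^\updownarrow$ reduces to that of $\mP_x^\uparrow$; symmetrically for $x<-1$. This yields the Markov property of the combined process together with the stated transition probabilities. I expect the main obstacle to be the downward invariance of $h_\downarrow$: identifying $(-1-y)^{\alpha-1}$ as the correct invariant function rests on the fluctuation theory (scale functions and ladder-height renewal functions) of spectrally negative stable processes and needs a careful integrability/optional-stopping justification, whereas the gluing step is then essentially formal.
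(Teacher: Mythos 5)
Your proposal is correct and follows essentially the same route as the paper: identify the two branches of \eqref{transform} as Doob $h$-transforms of the process killed in $(-\infty,1]$ resp.\ $[-1,\infty)$ by invariant functions coming from the theory of L\'evy processes conditioned to stay above/below a level, and then obtain the Markov property of the glued process from the fact that each branch is confined to its own half-line. The differences are minor. Where the paper simply cites Chaumont--Doney together with spatial homogeneity and the explicit stable potential functions $U_-(x)=x$, $U_+(x)=x^{\alpha-1}$, you re-derive the upward invariance by optional stopping of the centred martingale $X_t-x$ (using downward creeping, $X_{T_{(-\infty,1]}}=1$), which is a nice self-contained alternative; for the downward branch you invoke the same fluctuation-theoretic fact the paper uses. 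Your gluing step (conservativeness of the $h$-transforms forces the kernel to be block-diagonal) is the same observation that the paper packages as an explicit verification of the Chapman--Kolmogorov equations. One slip to correct: the invariant function for conditioning the spectrally negative dual $\hat X$ to stay above $1$ is the renewal function of its \emph{descending} ladder height process (equivalently, the ascending ladder height process of $X$), not of the ascending ladder height process of $\hat X$; the latter is a pure drift whose renewal function is linear in $z$ and would give $z$ rather than $z^{\alpha-1}$. Since you in fact work with the scale function $W(z)\propto z^{\alpha-1}$, which is the correct object, this is only a naming error and does not affect the argument.
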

We will see that $X$ under $\mP_x^\updownarrow$ is just the stable process conditioned to stay above $1$ when it starts above $1$ and the stable process conditioned to stay below $-1$ when it starts below $-1$.

\smallskip

Now we are ready to present our main result which says that the process conditioned to avoid $[-1,1]$ in the sense of \eqref{cond_exp} corresponds to $(X,\mP_x^\updownarrow)$.
\begin{theorem}\label{mainthm}
Let $(X_t)_{t \geq 0}$ be a stable process with self-similarity index $\alpha \in (1,2)$ and no negative jumps. Then it holds
\begin{align}\label{eq_main}
\lim_{q \searrow 0} \mP_x(\Lambda, t < e_q \,|\, e_q < T_{[-1,1]}) = \mP^\updownarrow_x(\Lambda),
\end{align}
for $\Lambda \in \cF_t$ and $x \notin [-1,1]$.
\end{theorem}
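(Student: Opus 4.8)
The plan is to reduce the left-hand side of \eqref{eq_main} to an $h$-transform-type expression driven by the survival probability $g_q(y) := \mP_y(e_q < T_{[-1,1]})$, and then to analyse the behaviour of $g_q$ as $q \searrow 0$. Writing the conditional probability as a quotient and using that $e_q$ is independent of $X$ with $\mP(t<e_q<s)=e^{-qt}-e^{-qs}$ on $\{t<s\}$, the numerator becomes $\mE_x[\1_\Lambda \1_{\{t<T_{[-1,1]}\}}(e^{-qt}-e^{-qT_{[-1,1]}})]$. Splitting $e^{-qT_{[-1,1]}}=e^{-qt}e^{-qT_{[-1,1]}\circ\theta_t}$ on $\{t<T_{[-1,1]}\}$ and applying the Markov property at time $t$, I would obtain
\begin{equation*}
\mP_x(\Lambda, t < e_q \mid e_q < T_{[-1,1]}) = e^{-qt}\,\mE_x\Big[\1_\Lambda \1_{\{t<T_{[-1,1]}\}}\frac{g_q(X_t)}{g_q(x)}\Big].
\end{equation*}
Since $e^{-qt}\to 1$, everything reduces to identifying the limit of the ratio $g_q(X_t)/g_q(x)$ and justifying its passage through the expectation.

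The second step is the asymptotics of $g_q$, where the spectrally positive structure makes the two sides of the interval behave very differently. For $x>1$ the absence of negative jumps forces the process to creep down to $1$ before entering $[-1,1]$, so on $\{t<T_{[-1,1]}\}$ one has $X_t>1$ and $T_{[-1,1]}=T_{(-\infty,1]}$; here the one-sided first-passage identity gives $\mE_y[e^{-qT_{(-\infty,1]}}]=e^{-\Phi(q)(y-1)}$ with $\Phi(q)=c_\alpha q^{1/\alpha}$, whence $g_q^{\uparrow}(y) = 1-e^{-\Phi(q)(y-1)} \sim c_\alpha q^{\hat\rho}(y-1)$ as $q\searrow 0$, recalling $\hat\rho=1/\alpha$. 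For $x<-1$ I expect the self-similar scaling together with the harmonic function $h_\downarrow(y)=(-1-y)^{\alpha-1}=(-1-y)^{\alpha\rho}$ of the process conditioned to stay below $-1$ to yield $g_q^{\downarrow}(y)\sim c\,(-1-y)^{\alpha-1}q^{\rho}$, with $\rho=1-1/\alpha$. Because $\alpha\in(1,2)$ gives $\rho<\hat\rho$, the lower power $q^\rho$ dominates $q^{\hat\rho}$, so the contribution from below is of strictly larger order than that from above.

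Comparing orders then pins down all three ratio limits. For $x,y>1$ the factor $q^{\hat\rho}$ cancels and $g_q(y)/g_q(x)\to (y-1)/(x-1)=h_\uparrow(y)/h_\uparrow(x)$; for $x,y<-1$ the factor $q^\rho$ cancels and $g_q(y)/g_q(x)\to h_\downarrow(y)/h_\downarrow(x)$; and for $x<-1<1<y$ the ratio $g_q^{\uparrow}(y)/g_q^{\downarrow}(x)$ is of order $q^{\hat\rho-\rho}\to 0$. Feeding these into the expectation and using the geometric facts that, starting above, $\{t<T_{[-1,1]}\}\subseteq\{X_t>1\}$ with $T_{[-1,1]}=T_{(-\infty,1]}$, and that, starting below, $\{t<T_{[-1,1]},\,X_t<-1\}=\{t<T_{[-1,\infty)}\}$ (since the lack of negative jumps prevents a return below $-1$ once the interval has been jumped over), recovers exactly $\mP_x^{\uparrow}(\Lambda)$ for $x>1$ and $\mP_x^{\downarrow}(\Lambda)$ for $x<-1$, that is $\mP_x^{\updownarrow}(\Lambda)$.

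The main obstacle is the below-asymptotics $g_q^{\downarrow}(y)\sim c(-1-y)^{\alpha-1}q^{\rho}$. Unlike the upper side, entering $[-1,1]$ from below is not a one-sided passage: the process may jump over the interval into $(1,\infty)$, so $g_q^{\downarrow}$ genuinely mixes the event of staying below $-1$ until $e_q$ with the event of overshooting to $(1,\infty)$ and then surviving above. Decomposing at the first passage time $T_{[-1,\infty)}$ and using the memorylessness of $e_q$ reduces the overshoot term to $\mE_y[\1_{\{T_{[-1,\infty)}<e_q,\,X_{T_{[-1,\infty)}}>1\}}\,g_q^{\uparrow}(X_{T_{[-1,\infty)}})]$, which must be shown to preserve the spatial profile $(-1-y)^{\alpha-1}$ at order $q^\rho$; controlling it requires the law of the overshoot across the interval and a careful integration against $g_q^{\uparrow}$. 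Establishing this precise order, together with uniform bounds on $g_q$ to license dominated convergence in the final passage to the limit, is the technical heart of the proof; the remaining identification of the limit with $\mP_x^{\updownarrow}$ via Proposition~\ref{propmain} is then routine.
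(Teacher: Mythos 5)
You have reconstructed the skeleton of the paper's argument almost exactly: your reduction $\mP_x(\Lambda,\,t<e_q\mid e_q<T_{[-1,1]})=\mathrm{e}^{-qt}\,\mE_x\big[\1_\Lambda\1_{\{t<T_{[-1,1]}\}}\,g_q(X_t)/g_q(x)\big]$ is the paper's identity \eqref{help4}; the split of the expectation into $\{t<T_{[-1,\infty)}\}$ and $\{t\in[T_{[-1,\infty)},T_{[-1,1]})\}$, the killing of the second piece by comparing $q^{1/\alpha}$ against $q^{1-1/\alpha}$ (using that $X_t$ at a \emph{fixed} time has finite mean since $\alpha>1$), and the identification of the surviving piece with $\mP^\downarrow_x$ all coincide with the paper's proof. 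The genuine gap is that the single quantitative input on which everything rests --- $\mP_y(e_q<T_{[-1,1]})\sim c\,(-1-y)^{\alpha-1}q^{1-1/\alpha}$ for $y<-1$, which is precisely the paper's Lemma \ref{lemma_help} --- is never proven: you call it ``the technical heart'' and sketch a route, but do not carry it out. Moreover, the route you sketch is thornier than your text suggests. In the decomposition at $T_{[-1,\infty)}$, the overshoot $X_{T_{[-1,\infty)}}+1$ has Rogozin tail of index $\alpha\rho=\alpha-1<1$ and hence \emph{infinite mean}, so the bound $g_q^\uparrow(z)\le q^{1/\alpha}(z-1)$ --- exactly the bound that saves you in the fixed-time split --- is useless there; integrating $1-\mathrm{e}^{-q^{1/\alpha}(z-1)}$ against the exact overshoot density shows the overshoot term contributes at the \emph{same} order $q^{1-1/\alpha}$ with the same profile $(-1-y)^{\alpha-1}$, so the constant $c$ mixes both contributions and none of this is routine. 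The paper proves Lemma \ref{lemma_help} by an entirely different chain: a theorem of Port giving $\mP_y(s<T_{[-1,1]})\sim\tilde c\,s^{1/\alpha-1}\lim_{z\to-\infty}u^{[-1,1]}(y,z)$ as $s\to\infty$; the explicit potential density of the spectrally positive process killed on $[-1,\infty)$ (Bertoin, Prop.\ VI.20), which produces the profile $(-1-y)^{\alpha-1}$; and a Tauberian theorem to pass from $s\to\infty$ to $q\searrow 0$. Without either your computation or this chain, the proof is incomplete at its core.

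A second, smaller gap is the final passage to the limit. You appeal to dominated convergence, which would require a bound of the form $q^{1/\alpha-1}g_q(z)\le C\,(-1-z)^{\alpha-1}$ uniformly in small $q$, i.e.\ a quantitative strengthening of the very asymptotics you have not established. The paper avoids needing any such bound: Fatou's lemma gives $\liminf_{q\searrow 0}\mP_x(\Lambda,\,t<e_q\mid e_q<T_{[-1,1]})\ge\mP^\updownarrow_x(\Lambda)$, and applying the same inequality to $\Lambda^{\mathrm{C}}\in\cF_t$ together with the fact that $\mP^\updownarrow_x$ is a probability measure (Proposition \ref{propmain}) yields the matching upper bound for the $\limsup$. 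You should adopt this two-sided Fatou argument; it eliminates the domination issue entirely.
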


When the starting value is above the interval $[-1,1]$ this result is not very surprising because the process conditioned to avoid the interval must be the same as the process conditioned to stay above $1$ since there are no negative jumps. But if the initial value is below $-1$ the result is rather surprising. In this case our statement says again that the process conditioned to avoid the interval is just the process conditioned to stay below $-1$. In other words, the case in which the process makes a jump from below to above the interval is cancelled by the limiting procedure.

\smallskip

A simple consequence is that the conditioned process converges a.s. to $+\infty$ if the initial value is above the interval and to $-\infty$ if the initial value is below the interval. This can be seen by the long-time behaviour of Lévy processes conditioned to stay positive, Chaumont and Doney \cite{Chau_Don_01}.

\smallskip

We remark that the natural procedure of plugging in the values of $\rho$ in the one-sided jumps case into the invariant function of the two-sided jumps case (found in \cite{Doer_Kyp_Wei_01}) does not work. When $\alpha<1$ (and $\rho = 1$) the function does not exist and when $\alpha>1$ (and $\rho = 1-1/\alpha$) one can show that the function is not invariant any more.

\section{Preliminaries}\label{sec::prel}
To prove our results we have to introduce some fluctuation theory for Lévy processes and Lévy processes conditioned to stay positive. For this section let $X=(X_t)_{t \geq 0}$ be a Lévy process.

\smallskip

\subsection{Ladder height processes and potential functions} A crucial ingredient in our analysis are the potential functions $U_+$ and $U_-$ of the ascending and descending ladder height processes. To introduce them, some notation is needed. Denote the local time of the Markov process $(\sup_{s \leq t} X_s - X_t)_{t\ge 0}$ at $0$ by $L$, which is also called the local time of $X$ at the maximum. Let $L_t^{-1} = \inf\{s>0 : L_s > t\}$ denote the inverse local time at the maximum and $\kappa(q) = -\log \mE \big[ e^{- q L^{-1}_1} \big]$, for $q \geq 0$, the 
Laplace exponent of $L^{-1}$. We define $H_t =\sup_{s\leq L^{-1}_t}X_s$, the so-called (ascending) ladder height process. It is well-known that $H$ is a subordinator. Under the dual measure $\hat\mP$, the process $L^{-1}$ is the inverse local time at the minimum, and we denote its Laplace exponent by $\hat\kappa$. Still under this dual measure, $H$ is the descending ladder height process.

\smallskip

The $q$-resolvents of $H$, for $q \geq 0$, will be denoted by $U_+^q$; that is,
$$U^q_+(\dd x) \coloneqq \mE \Big[ \int\limits_{[0,\infty)} e^{-qt} \1_{\{ H^+_t \in \dd x, L_t^{-1} <\infty\}} \, \dd t \Big].$$ 
For $q=0$ we abbreviate $U_+(\dd x) = U_+^0(\dd x)$, and denote the so-called potential function by $U_+(x) = U_+([0,x])$, for $x\geq 0$. We define $U_-^q$ and $U_-$ according to the same procedure for the descending ladder height process. 

\smallskip

In the case of a stable process all appearing fluctuation expressions are known explicitly: $\kappa(q) = q^\rho$, $\hat{\kappa}(q) = q^{1-\rho}$, $U_-(x) = x^{\alpha(1-\rho)}$ and $U_+(x) = x^{\alpha\rho}$. As it was mentioned earlier, for a stable process without negative jumps it holds $\rho = 1$ in the case $\alpha<1$ and $\rho = 1-1/\alpha$ in the case $\alpha>1$. Hence, the expressions above even simplify more.

\smallskip

\subsection{Lévy processes conditioned to stay positive}\label{subsec_cond}
For general Lévy processes (which are not compound Poisson processes) Chaumont and Doney \cite{Chau_Don_01} showed that the potential function $U_-$ of the dual ladder height process is an invariant function for the Lévy process killed on entering $(-\infty,0]$. Furthermore they showed
$$\lim_{q \searrow 0} \mP_x(\Lambda, t < e_q \,|\, e_q < T_{(-\infty,0]}) = \mE_x \Big[ \1_\Lambda \1_{\{ t < T_{(-\infty,0]}\}} \frac{U_-(X_t)}{U_-(x)} \Big],$$
i.e. the $h$-transformed process corresponding to the invariant function $U_-$ is the process conditioned to avoid $(-\infty,0]$ in the sense of \eqref{cond_exp}. Since we later consider (stable) Lévy processes without negative jumps we remark that in this case it holds $U_-(x) = x$. Analogously, one can construct the Lévy process conditioned to stay negative via the potential function of the ladder height process $U_+$. One important ingredient in the proofs are the following relations:
\begin{align}\label{as_pos}
\hat{\kappa}(q) U_-^q(x) = \mP_x (e_q < T_{(-\infty,0]})
\end{align}
and 
\begin{align}\label{as_neg}
\kappa(q) U_+^q(x) = \mP_x (e_q < T_{[0,\infty)}).
\end{align}
Because of the spatial homogeneity of Lévy processes it is possible to condition them to stay above/ below a level via an $h$-transform using shifted versions of $U_-$ and $U_+$.

\section{Proofs}\label{sec:proof}
\subsection{Proofs of Theorem \ref{mainthm1} and Corollary \ref{cor}}
First we show $h(x) = \mP_x(T_{[-1,1]} = \infty), x \notin [-1,1]$. For $x>1$ this is obvious because $X$ is a subordinator. For $x<-1$ it can be shown via overshoot-distributions for stable processes (see e.g. Rogozin \cite{Rog_01}):
\begin{align*}
\mP_x(T_{[-1,1]} = \infty) &= \mP_x(X_{T_{[-1,\infty)}} > 1) \\
&= 1 - \frac{\sin(\pi\alpha)}{\pi} \int_0^{\frac{2}{-1-x}}  t^{-\alpha} (1+t)^{-1}  \, \dd t. \\
&= h(x).
\end{align*} 
The following standard argument shows that $h$ is invariant for the process killed on entering $[-1,1]$:
\begin{align*}
\mE_x \big[ \1_{\{ t < T_{[-1,1]} \}} h(X_t) \big] &= \mE_x \big[ \1_{\{ t < T_{[-1,1]} \}} \mP_{X_t}(T_{[-1,1]} = \infty) \big]  \\
&= \lim_{s \rightarrow \infty }\mE_x \big[ \1_{\{ t < T_{[-1,1]} \}} \mP_{X_t}(T_{[-1,1]} > s) \big] \\
&= \lim_{s \rightarrow \infty } \mP_{x}(T_{[-1,1]} > s+t) \\
&= \mP_x(T_{[-1,1]} = \infty) \\
&= h(x),
\end{align*}
where we used dominated convergence and the Markov property. Furthermore, another simple calculation shows that the conditioned process in the sense of \eqref{cond} (and similar \eqref{cond_exp}) is the corresponding $h$-transformed process:
\begin{align*}
\mE_x\Big[ \1_\Lambda \1_{\{ t < T_{[-1,1]} \}}  \frac{h(X_t)}{h(x)} \Big] &= \mE_x\Big[ \1_\Lambda \1_{\{ t < T_{[-1,1]} \}}  \frac{\mP_{X_t}(T_{[-1,1]} = \infty)}{\mP_x(T_{[-1,1]} = \infty)} \Big] \\
&= \frac{1}{\mP_x(T_{[-1,1]} = \infty)}  \mP_x(\Lambda, T_{[-1,1]} = \infty)\\
&= \lim_{s \rightarrow \infty}  \frac{1}{\mP_x(T_{[-1,1]} > s)}  \mP_x(\Lambda, T_{[-1,1]} >s) \\
&= \lim_{s \rightarrow \infty}  \mP_x(\Lambda \,|\, T_{[-1,1]} >s).
\end{align*}

\subsection{Proof of Proposition \ref{propmain}}
If $X$ is a stable process with self-similarity index $\alpha \in (1,2)$ without negative jumps, it holds $U_-(x) = x$ and $U_+(x) = x^{\alpha-1}$ for $x >0$. By the results of Section \ref{subsec_cond} and the spatial homogeneity of Lévy processes we get that $x \mapsto x-1, x>1$ is an invariant function for the stable process killed on entering $(-\infty,1]$ and $x \mapsto (-x-1)^{\alpha-1}, x<-1$ is an invariant function for the stable process killed on entering $[-1,\infty)$. In particular $\mP_x^\uparrow$ (for $x>1$) and $\mP_x^\downarrow$ (for $x<-1$) are probability measures since they are constructed via $h$-transforms using invariant functions. Consequently, $\mP_x^\updownarrow$ is a probability measure for $x \notin [-1,1]$.

\smallskip

That $\mP_x^\updownarrow(X_t \in \dd y) = P_t^\updownarrow(x,\dd y)$ is obvious. It remains to show the Chapman-Kolmogorov equality for $(P_t^\updownarrow)_{t \geq 0}$. For that let us denote by $P_t^\uparrow(x,\dd y)$ the transition semigroup of the stable process conditioned to stay above $1$, i.e.
$$P_t^\uparrow(x,\dd y) = \mP^\uparrow_x(X_t \in \dd y) = \mE_x\Big[ 1_{\{X_t \in \dd y\}} \1_{\{t < T_{(-\infty,1]}\}} \frac{X_t-1}{x-1}\Big], \quad x,y>1.$$
Obviously, we have $P_t^\updownarrow(x,\dd y) = P_t^\uparrow(x,\dd y)$ for $x>1$ (we agree that $P_t^\uparrow(x,\dd y) = 0$ for $y \leq 1$). So we get for $x>1$ and $y \notin [-1,1]$:
\begin{align*}
\int_{\mR\setminus [-1,1]} P_s^\updownarrow(z, \dd y) \, P^\updownarrow_t(x,\dd z) &= \int_{\mR\setminus [-1,1]} P_s^\updownarrow(z, \dd y) \, P^\uparrow_t(x,\dd z) \\
&= \int_{(1,\infty)} P_s^\updownarrow(z, \dd y) \, P^\uparrow_t(x,\dd z) \\
&= \int_{(1,\infty)} P_s^\uparrow(z, \dd y) \, P^\uparrow_t(x,\dd z) \\
& = P^\uparrow_{t+s}(x,\dd y) \\
& = P^\updownarrow_{t+s}(x,\dd y).
\end{align*}
The second last equality holds because $(P_t^\uparrow)_{t \geq 0}$ fulfils the Chapman-Kolmogorov equality since the Lévy process conditioned to stay positive is a Markov process. The last equality holds because $P^\updownarrow_{t+s}(x,\dd y)= 0 = P^\uparrow_{t+s}(x,\dd y)$ for $x>1,y<-1$. The case $x<-1$ can be handled analogously.

\subsection{Proof of Theorem \ref{mainthm}}
With the knowledge of stable processes conditioned to stay positive the case $x>1$ is almost trivial. We remind ourselves that due to no negative jumps, the process can not reach the area below the interval without hitting it, i.e. it holds $T_{(-\infty,1]} = T_{[-1,1]}$ a.s. under $\mP_x$. Hence,
\begin{align*}
\lim_{q \searrow 0} \mP_x(\Lambda, t < e_q \,|\, e_q < T_{[-1,1]}) &= \lim_{q \searrow 0} \mP_x(\Lambda, t < e_q \,|\, e_q < T_{(-\infty,1]}) \\
&= \mE_x \Big[\1_\Lambda \1_{\{t < T_{(-\infty,1]}\}} \frac{U_-(X_t-1)}{U_-(x-1)}\Big]  \\
&= \mE_x \Big[\1_\Lambda \1_{\{t < T_{(-\infty,1]}\}} \frac{X_t-1}{x-1}\Big] \\
&= \mP_x^\updownarrow(\Lambda).
\end{align*}

\smallskip

From now on let $x<-1$. We follow the usual approach and consider first the asymptotics of $\mP_x(e_q < T_{[-1,1]})$ for $q \searrow 0$.
\begin{lemma}\label{lemma_help}
There is a constant $c>0$ such that 
$$\lim_{q \searrow 0} q^{\frac{1}{\alpha}-1} \mP_x(e_q < T_{[-1,1]}) = c (-1-x)^{\alpha-1}, \quad x <-1.$$
\end{lemma}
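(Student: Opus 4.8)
The plan is to split the event according to the first time $\tau:=T_{[-1,\infty)}$ at which the path, starting from $x<-1$, reaches the level $-1$. Since $X$ has no negative jumps, every path that ever hits $[-1,1]$ must first enter $[-1,\infty)$, and there it either lands in $[-1,1]$ (so $T_{[-1,1]}=\tau$) or overshoots strictly above $1$; in the latter case it can return to $[-1,1]$ only by the subsequent \emph{downward} passage through the point $1$, i.e.\ at time $T_{(-\infty,1]}$ of the shifted process. Using $\mP_x(e_q<T_{[-1,1]})=1-\mE_x[e^{-qT_{[-1,1]}}]$, the strong Markov property at $\tau$, and the fact that $X_\tau>-1$ almost surely (there is no upward creeping), I would establish the exact decomposition
\[
\mP_x(e_q < T_{[-1,1]}) = \underbrace{\mP_x(e_q < \tau)}_{=:A_q} \;+\; \underbrace{\mE_x\big[ e^{-q\tau}\,\1_{\{X_\tau > 1\}}\, g_q(X_\tau)\big]}_{=:B_q},
\qquad g_q(z):=\mP_z(e_q < T_{(-\infty,1]}).
\]

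For the first term, spatial homogeneity moves the barrier to $0$, and \eqref{as_neg} gives $A_q=\mP_{x+1}(e_q<T_{[0,\infty)})=\kappa(q)\,U_+^q(-1-x)$, where the potential is read at the (fixed) distance $-1-x>0$. With $\kappa(q)=q^{\rho}=q^{1-1/\alpha}$ and the monotone convergence $U_+^q(-1-x)\uparrow U_+(-1-x)=(-1-x)^{\alpha-1}$ of the $q$-potentials, this already carries the right order:
\[
q^{1/\alpha-1}A_q = U_+^q(-1-x) \longrightarrow (-1-x)^{\alpha-1}.
\]

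The real work is the term $B_q$, and the first thing I would flag is that the naive bound $g_q(z)\le \hat\kappa(q)\,U_-(z-1)=q^{1/\alpha}(z-1)$ followed by pulling out a constant \emph{fails}: the overshoot $X_\tau+1$ has infinite first moment, because its tail exponent equals $\alpha\rho=\alpha-1\in(0,1)$ (the ascending ladder height of $X$ is the $(\alpha-1)$-stable subordinator with $U_+(x)=x^{\alpha-1}$). Consequently $B_q$ is \emph{not} negligible; it contributes at the same order $q^{1-1/\alpha}$ as $A_q$. To handle it I would use self-similarity twice. First, scaling yields $g_q(z)=\psi\big(q(z-1)^\alpha\big)$ with $\psi(s):=\mP_1(e_s<T_{(-\infty,0]})$, where $0\le\psi\le1$, $\psi(s)\sim c_0 s^{1/\alpha}$ as $s\to0$, and $\psi(\infty)=1$. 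Second, scaling gives $X_\tau+1\overset{d}{=}(-1-x)\,\tilde O$ for the unit-distance overshoot $\tilde O$ with $\mP(\tilde O>v)\sim c_1 v^{-(\alpha-1)}$. Substituting these and changing variables $u=q^{1/\alpha}(-1-x)\,w$ concentrates the mass on overshoots of size $\asymp q^{-1/\alpha}$ and extracts the order:
\[
q^{1/\alpha-1}B_q \longrightarrow c_1(\alpha-1)(-1-x)^{\alpha-1}\int_0^\infty \psi(u^\alpha)\,u^{-\alpha}\,\dd u =: c_B\,(-1-x)^{\alpha-1}.
\]
Here the constant integral converges \emph{precisely} because $\alpha\in(1,2)$: the integrand behaves like $u^{1-\alpha}$ near $0$ (needs $\alpha<2$) and like $u^{-\alpha}$ near $\infty$ (needs $\alpha>1$).

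I expect the main obstacle to be making this last step rigorous, in two respects. The delicate point is the factor $e^{-q\tau}$: it tends to $1$ pointwise, yet the contributing mass escapes to large overshoots as $q\to0$, so one cannot simply drop it. The remedy is that by scaling $\tau\overset{d}{=}(-1-x)^\alpha\,\tilde\tau$ has a fixed, $q$-independent law, and the first-passage time stays tight even under the size-biasing towards large overshoots, so $q\tau\to0$ in probability on the relevant scale; combined with the uniform bound $e^{-q\tau}\le1$ this licenses the passage to the limit, while an integrable dominating function justifies the change of variables as a dominated-convergence argument. Granting this, adding the two contributions gives $\lim_{q\searrow0}q^{1/\alpha-1}\mP_x(e_q<T_{[-1,1]})=(c_A+c_B)(-1-x)^{\alpha-1}$, which is the claim with $c:=c_A+c_B>0$. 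It is worth emphasising that, in contrast to the final theorem, \emph{both} regimes feed into the normalising constant here; the cancellation of the ``jump-over'' paths is a feature of the conditional laws for fixed $\Lambda\in\cF_t$, settled later, not of this asymptotic.
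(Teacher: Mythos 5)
Your decomposition at $\tau = T_{[-1,\infty)}$, the treatment of $A_q$ via \eqref{as_neg} and monotone convergence of the $q$-potentials, and the observation that $B_q$ cannot be discarded --- because the overshoot $X_\tau+1$ has tail exponent $\alpha\rho=\alpha-1\in(0,1)$ and hence infinite mean --- are all correct, and the last point is genuinely insightful (it explains why the crude bound $g_q(z)\le q^{1/\alpha}(z-1)$, which the paper does use at a \emph{fixed} time $t$ in the proof of Theorem \ref{mainthm}, is useless at the random time $\tau$). The fatal problem is your treatment of the factor $\ec^{-q\tau}$, and it is not a reparable technicality: the claim that ``the first-passage time stays tight even under the size-biasing towards large overshoots'' is false, and the limit you assert for $q^{1/\alpha-1}B_q$ is wrong. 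To see this, write the overshoot law by the compensation formula against the potential density of the killed process, which in this one-sided case is exactly \eqref{help2}: $\mP_x(X_\tau+1>v)=\int_{-\infty}^{-1}u^{[-1,\infty)}(x,y)\,\bar\Pi(v-1-y)\,\dd y$ with $u^{[-1,\infty)}(x,y)=\tfrac{k}{\Gamma(\alpha)}\bigl((-x-1)^{\alpha-1}-(-x+y)_+^{\alpha-1}\bigr)$. Since this density is essentially the constant $\tfrac{k}{\Gamma(\alpha)}(-x-1)^{\alpha-1}$ at large depths while $\bar\Pi(v-1-y)\asymp (v+|y|)^{-\alpha}$, the mass producing an overshoot larger than $v$ comes from jumps made at depths $|y|\asymp v$ (and beyond), not from jumps made near the barrier. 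By self-similarity, reaching depth $v$ takes time of order $v^\alpha$. Hence, conditionally on the events $\{X_\tau-1 \asymp q^{-1/\alpha}\}$ that drive the asymptotics of $B_q$, the time $\tau$ is of order $q^{-1}$, so $q\tau$ does \emph{not} vanish there: $\ec^{-q\tau}$ is a genuine damping factor, and the true limit of $q^{1/\alpha-1}B_q$ is strictly smaller than $c_1(\alpha-1)(-1-x)^{\alpha-1}\int_0^\infty\psi(u^\alpha)u^{-\alpha}\,\dd u$. Unconditional tightness of $\tau$ (which is all that scaling gives you) says nothing about its size on a rare event of probability $\asymp q^{1-1/\alpha}$.

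The spatial factor $(-1-x)^{\alpha-1}$ does survive, since the whole picture scales, so the lemma itself is not endangered; but to prove it along your route you would need the joint scaling limit of $(\tau, X_\tau)$ under this size-biasing --- in effect the $q$-resolvent density of the process killed at $\tau$ --- and not merely the marginal overshoot law plus a dominated-convergence patch. That extra work is precisely what the paper's proof avoids: it quotes Port's theorem for the polynomial-time asymptotics $\lim_{s\to\infty}s^{1-1/\alpha}\mP_x(s<T_{[-1,1]})=\tilde c\,\lim_{y\to-\infty}u^{[-1,1]}(x,y)$, identifies the right-hand side through the same explicit density \eqref{help2}, and then passes to exponential times by a Tauberian theorem. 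The time-damping that defeats your argument is exactly the effect that the Laplace-transform/Tauberian step processes correctly, which is why the paper never needs to disentangle where the overshoot mass comes from.
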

\begin{proof}
We apply a Theorem of Port \cite{Por_01} which says that there is a constant $\tilde c >0$ such that
\begin{align}\label{help1}
\lim_{q \searrow 0} s^{1-\frac{1}{\alpha}} \mP_x(s < T_{[-1,1]}) = \tilde c \lim_{y \rightarrow -\infty}u^{[-1,1]}(x,y), \quad x <-1,
\end{align}
where for a general compact set $y \mapsto u^{K}(x,y)$ is the potential density of the stable process killed on entering $K$ started in $x$. Using that the process has no negative jumps and Bertoin \cite{Bert_01}, Proposition VI.20 leads to
\begin{align}\label{help2}
\begin{split}
u^{[-1,1]}(x,y) &= u^{[-1,\infty)}(x,y) \\
&= \frac{k}{\Gamma(\alpha)} \big((-x-1)^{\alpha-1} - (-x+y)_+^{\alpha-1} \big),
\end{split}
\end{align}
for $x,y<-1$, where $k>0$ is a constant. Combining \eqref{help1} and \eqref{help2} we get 
\begin{align}
\lim_{q \searrow 0} s^{1-\frac{1}{\alpha}} \mP_x(s < T_{[-1,1]}) = \hat c (-x-1)^{\alpha-1}, \quad x <-1,
\end{align} 
where $\hat c = \tilde c k/ \Gamma(\alpha)$. To reach the asymptotic including the exponential time, we note,
\begin{align}
\mP_x(e_q < T_{[-1,1]}) = q \int_0^\infty \mathrm{e}^{-qs} \mP_x(s < T_{[-1,1]}) \, \dd s.
\end{align}
Since we know that $\mP_x(s < T_{[-1,1]})$ behaves like $\hat c (-x-1)^{\alpha-1} s^{\frac{1}{\alpha}-1}$ for $s$ tending to $\infty$, we can apply standard Tauberian Theorems (see e.g. \cite{Kyp_01}, Theorem 5.14) to deduce that $\mP_x(e_q < T_{[-1,1]})$ behaves like
$$\frac{\hat c}{\Gamma(\frac{1}{\alpha})} (-x-1)^{\alpha-1} q^{1-\frac{1}{\alpha}} = c (-x-1)^{\alpha-1} q^{1-\frac{1}{\alpha}}$$
for $q \searrow 0$ which shows the claim.
\end{proof}

With this lemma in hand we are ready to show Theorem \ref{mainthm} for the remaining case $x<-1$.
\begin{proof}[Proof of Theorem \ref{mainthm}]
Let $x<-1$. First, we note
\begin{align}\label{help4}
\begin{split}
\mP_x(\Lambda, t < e_q < T_{[-1,1]}) &= q \int_0^\infty \mathrm{e}^{-qu} \mP_x(\Lambda, t < u < T_{[-1,1]}) \, \dd u \\
&= q \int_t^\infty  \mathrm{e}^{-qu} \mP_x(\Lambda,  u < T_{[-1,1]}) \, \dd u \\
&= q \mathrm{e}^{-qt} \int_0^\infty  \mathrm{e}^{-qu} \mP_x(\Lambda,  u+t < T_{[-1,1]}) \, \dd u \\
&= q \mathrm{e}^{-qt} \int_0^\infty  \mathrm{e}^{-qu} \mE_x \big[\1_\Lambda \1_{\{ t <T_{[-1,1]} \}} \mP_{X_t}(u < T_{[-1,1]}) \big]  \, \dd u \\
&= \mathrm{e}^{-qt} \mE_x \big[\1_\Lambda \1_{\{ t <T_{[-1,1]} \}} \mP_{X_t}(e_q < T_{[-1,1]}) \big].
\end{split}
\end{align} 
Now we start on the left-hand-side of \eqref{eq_main} and do some basic manipulations like applying \eqref{help4} and the Markov property.
\begin{align}\label{help3}
\begin{split}
& \mP_x(\Lambda, t < e_q \,|\, e_q < T_{[-1,1]})\\
&=\frac{\mP_x(\Lambda, t < e_q < T_{[-1,1]})}{\mP_x(e_q < T_{[-1,1]})} \\
&=\frac{\mathrm{e}^{-qt}}{\mP_x(e_q < T_{[-1,1]})} \mE_x \big[\1_\Lambda \1_{\{ t <T_{[-1,1]} \}} \mP_{X_t}(e_q < T_{[-1,1]}) \big] \\
&= \frac{\mathrm{e}^{-qt}}{\mP_x(e_q < T_{[-1,1]})} \mE_x \big[\1_\Lambda \1_{\{ t <T_{[-1,\infty)} \}} \mP_{X_t}(e_q < T_{[-1,1]}) \big] \\
& \quad + \frac{\mathrm{e}^{-qt}}{\mP_x(e_q < T_{[-1,1]})} \mE_x \big[\1_\Lambda \1_{\{ t \in [T_{[-1,\infty)},T_{[-1,1]}) \}} \mP_{X_t}(e_q < T_{[-1,1]}) \big] \\
&= \frac{\mathrm{e}^{-qt}}{\mP_x(e_q < T_{[-1,1]})} \mE_x \big[\1_\Lambda \1_{\{ t <T_{[-1,\infty)} \}} \mP_{X_t}(e_q < T_{[-1,1]}) \big] \\
& \quad + \frac{\mathrm{e}^{-qt}}{\mP_x(e_q < T_{[-1,1]})} \mE_x \big[\1_\Lambda \1_{\{ t \in [T_{[-1,\infty)},T_{[-1,1]}) \}} \mP_{X_t}(e_q < T_{(-\infty,1]}) \big].
\end{split}
\end{align}
In the next step we show that the second summand of the last term converges to $0$ for $q \searrow 0$. From \eqref{as_pos} and the explicit form $U_-(x) = x$ we know
$$\mP_{X_t}(e_q < T_{(-\infty,1]}) = q^{\frac{1}{\alpha}} U_-^q(X_t-1) \leq q^{\frac{1}{\alpha}} U_-(X_t-1) = q^{\frac{1}{\alpha}} (X_t-1).$$
In particular it holds
\begin{align*}
&  \frac{\mathrm{e}^{-qt}}{\mP_x(e_q < T_{[-1,1]})} \mE_x \big[\1_\Lambda \1_{\{ t \in [T_{[-1,\infty)},T_{[-1,1]}) \}} \mP_{X_t}(e_q < T_{(-\infty,1]}) \big] \\
&\leq \frac{\mathrm{e}^{-qt}}{\mP_x(e_q < T_{[-1,1]})} q^{\frac{1}{\alpha}} \mE_x \big[\1_\Lambda \1_{\{ t \in [T_{[-1,\infty)},T_{[-1,1]}) \}} (X_t-1)\big].
\end{align*}
Since an $\alpha$-stable process has all moments of order less than $\alpha$ it holds 
$$\mE_x \big[\1_\Lambda \1_{\{ t \in [T_{[-1,\infty)},T_{[-1,1]}) \}} (X_t-1)\big] < \infty.$$
On the other hand by Lemma \ref{lemma_help}, $q^{\frac{1}{\alpha}} / \mP_x(e_q < T_{[-1,1]})$ behaves like $q^{\frac{1}{\alpha}}/(c q^{1-\frac{1}{\alpha}}) = q^{\frac{2}{\alpha}-1}/c$ which converges to $0$ because $\alpha<2$ implies $2/\alpha >1$. By this we obtain 
$$\lim_{q \searrow 0}  \frac{\mathrm{e}^{-qt}}{\mP_x(e_q < T_{[-1,1]})} \mE_x \big[\1_\Lambda \1_{\{ t \in [T_{[-1,\infty)},T_{[-1,1]}) \}} \mP_{X_t}(e_q < T_{(-\infty,1]}) \big] = 0.$$
Now we plug this in \eqref{help3} to calculate the limit for $q \searrow 0$. We use Lemma \ref{lemma_help} and a standard procedure in the area of Lévy processes conditioned to avoid a set. First we get via Fatou's lemma
\begin{align*}
& \liminf_{q \searrow 0} \mP_x(\Lambda, t < e_q \,|\, e_q < T_{[-1,1]})\\
&= \liminf_{q \searrow 0} \frac{\mathrm{e}^{-qt}}{\mP_x(e_q < T_{[-1,1]})} \mE_x \big[\1_\Lambda \1_{\{ t <T_{[-1,\infty)} \}} \mP_{X_t}(e_q < T_{[-1,1]}) \big] \\
&= \frac{1}{c(-1-x)^{\alpha-1}} \liminf_{q \searrow 0} \mE_x \big[\1_\Lambda \1_{\{ t <T_{[-1,\infty)} \}} q^{\frac{1}{\alpha}-1}\mP_{X_t}(e_q < T_{[-1,1]}) \big] \\
& \geq  \frac{1}{c(-1-x)^{\alpha-1}} \mE_x \big[\1_\Lambda \1_{\{ t <T_{[-1,\infty)} \}} \liminf_{q \searrow 0} q^{\frac{1}{\alpha}-1}\mP_{X_t}(e_q < T_{[-1,1]}) \big] \\
&= \frac{1}{c(-1-x)^{\alpha-1}}  \mE_x \big[\1_\Lambda \1_{\{ t <T_{[-1,\infty)} \}} c (-1-X_t)^{\alpha-1} \big] \\
&= \mP_x^\updownarrow(\Lambda).
\end{align*}
Next, note that for $\Lambda \in \cF_t$ also $\Lambda^{\mathrm{C}} \in \cF_t$. Moreover, we already know that $\mP^\updownarrow_x$ is a probability measure by Proposition \ref{propmain}. It follows
\begin{align*}
\limsup_{q \searrow 0} \mP_x(\Lambda, t < e_q \,|\, e_q < T_{[-1,1]}) &\leq \limsup_{q \searrow 0} (1-\mP_x(\Lambda^\mathrm{C}, t < e_q \,|\, e_q < T_{[-1,1]})) \\
&= 1 - \liminf_{q \searrow 0}\mP_x(\Lambda^\mathrm{C}, t < e_q \,|\, e_q < T_{[-1,1]}) \\
& \leq 1-  \mP_x^\updownarrow(\Lambda^\mathrm{C}) \\
&=  \mP_x^\updownarrow(\Lambda).
\end{align*}
Together this shows
$$\lim_{q \searrow 0} \mP_x(\Lambda, t < e_q \,|\, e_q < T_{[-1,1]}) = \mP_x^\updownarrow(\Lambda).$$
\end{proof}

\bibliographystyle{abbrvnat}
\bibliography{references}

\begin{thebibliography}{10}
\providecommand{\natexlab}[1]{#1}
\providecommand{\url}[1]{\texttt{#1}}
\expandafter\ifx\csname urlstyle\endcsname\relax
  \providecommand{\doi}[1]{doi: #1}\else
  \providecommand{\doi}{doi: \begingroup \urlstyle{rm}\Url}\fi

\bibitem[Bertoin(1996)]{Bert_01}
J.~Bertoin.
\newblock \emph{Lévy processes}, volume 121 of \emph{Cambridge Tracts in
  Mathematics}.
\newblock Cambridge University Press, 1996.

\bibitem[Chaumont and Doney(2005)]{Chau_Don_01}
L.~Chaumont and R.~A. Doney.
\newblock On {L}\'evy processes conditioned to stay positive.
\newblock \emph{Electron. J. Probab.}, 10\penalty0 (28):\penalty0 948--961,
  2005.

\bibitem[Chung and Walsh(2005)]{Chu_Wal_01}
K.~Chung and J.~Walsh.
\newblock \emph{Markov Processes, Brownian Motion, and Time Symmetry}.
\newblock Grundlehren der mathematischen Wissenschaften. Springer New York,
  2005.

\bibitem[Döring et~al.(2018)Döring, Kyprianou, and
  Weißmann]{Doer_Kyp_Wei_01}
L.~Döring, A.~E. Kyprianou, and P.~Weißmann.
\newblock Stable processes conditioned to avoid an interval.
\newblock \emph{arXiv:1802.07223}, 2018.

\bibitem[Döring et~al.(2019)Döring, Watson, and Weißmann]{Doer_Wat_Wei_01}
L.~Döring, A.~R. Watson, and P.~Weißmann.
\newblock Lévy processes with finite variance conditioned to avoid an
  interval.
\newblock \emph{Electron. J. Probab.}, 24\penalty0 (55):\penalty0 1--32, 2019.

\bibitem[Kyprianou(2014)]{Kyp_01}
A.~E. Kyprianou.
\newblock \emph{Fluctuations of {L}\'evy processes with applications}.
\newblock Springer Berlin Heidelberg, second edition, 2014.

\bibitem[Port(1967)]{Por_01}
S.~C. Port.
\newblock Hitting times and potentials for recurrent stable processes.
\newblock \emph{J. d'Analyse Math{\'e}matique}, 20\penalty0 (1):\penalty0
  371--395, 1967.

\bibitem[Revuz and Yor(2004)]{Rev_Yor_01}
D.~Revuz and M.~Yor.
\newblock \emph{Continuous Martingales and Brownian Motion}.
\newblock Grundlehren der mathematischen Wissenschaften. Springer Berlin
  Heidelberg, 2004.

\bibitem[Rogozin(1973)]{Rog_01}
B.~A. Rogozin.
\newblock The distribution of the first hit for stable and asymptotically
  stable walks on an interval.
\newblock \emph{Theory Prob. Appl.}, 17\penalty0 (2):\penalty0 332--338, 1973.

\bibitem[Sato(1999)]{Sat_01}
K.~Sato.
\newblock \emph{L\'evy processes and infinitely divisible distributions},
  volume~68 of \emph{Cambridge Studies in Advanced Mathematics}.
\newblock Cambridge University Press, 1999.

\end{thebibliography}

\end{document}